\newtheorem{theorem}{Theorem}[section]
\newtheorem{thm}[theorem]{Theorem}
\newtheorem{lem}[theorem]{Lemma}
\newtheorem{proposition}[theorem]{Proposition}
\newtheorem{corollary}[theorem]{Corollary}
\theoremstyle{definition}
\newtheorem{defn}[theorem]{Definition}
\theoremstyle{remark}
\newtheorem{rem}[theorem]{Remark}
\numberwithin{equation}{section}
 \DeclareMathAlphabet{\mathpzc}{OT1}{pzc}{m}{it}
\def\re{{\mathrm e}}
 \newcommand{\E}{\mathbb{E}}            
 \newcommand{\e}{\varepsilon}
 \newcommand{\PP}{\mathbb{P}}
 \newcommand{\Be}{\begin{equation}}
 \newcommand{\Ee}{\end{equation}}
 \newcommand{\Bs}{\begin{split}}
 \newcommand{\Es}{\end{split}}
  \newcommand{\Bes}{\begin{equation*}}
 \newcommand{\Ees}{\end{equation*}}
 \newcommand{\BT}{\begin{thm}}
 \newcommand{\ET}{\end{thm}}
 \newcommand{\Bp}{\begin{proof}}
 \newcommand{\Ep}{\end{proof}}
 \newcommand{\BL}{\begin{lem}}
 \newcommand{\EL}{\end{lem}}
 \newcommand{\BP}{\begin{proposition}}
 \newcommand{\EP}{\end{proposition}}
 \newcommand{\BC}{\begin{corollary}}
 \newcommand{\EC}{\end{corollary}}
 \newcommand{\BR}{\begin{rem}}
 \newcommand{\ER}{\end{rem}}
 \newcommand{\BD}{\begin{defn}}
 \newcommand{\ED}{\end{defn}}
 \newcommand{\BI}{\begin{itemize}}
 \newcommand{\EI}{\end{itemize}}
  \newcommand{\dif}{{\rm d}}
\def\PP{\mathbb P}
\def\<{\left<}\def\>{\right>}
\def\({\left(}\def\){\right)}
\begin{document}
\title
{Singular integrals of stable subordinator}

\author[L. Xu]{Lihu Xu}
\address{1. Department of Mathematics, Faculty of Science and Technology,  University of  Macau, Taipa, Macau. 2. UM Zhuhai Research Institute, Zhuhai, China}
\email{lihuxu@umac.mo}

\maketitle
\begin{minipage}{140mm}
\begin{center}
{\bf Abstract}
\end{center}
It is well known that $\int_{0}^{1} t^{-\theta} \dif t<\infty$ for $\theta \in (0,1)$ and $\int_{0}^{1} t^{-\theta} \dif t=\infty$ for $\theta \in [1,\infty)$. Since $t$ can be taken as an $\alpha$-stable subordinator with $\alpha=1$, it is natural to ask whether $\int_{0}^{1} t^{-\theta} d S_{t}$ has a similar property when $S_{t}$ is an $\alpha$-stable subordinator with $\alpha \in (0,1)$. We show that $\theta=\frac 1\alpha$ is the border line such that $\int_{0}^{1} t^{-\theta} d S_{t}$ is finite a.s. for $\theta \in (0, \frac 1\alpha)$ and blows up a.s. for $\theta \in [\frac1\alpha,\infty)$. When $\alpha=1$, our result recovers that of $\int_{0}^{1} t^{-\theta} \dif t$. Moreover, we give a $p$-th moment estimate for the integral when $\theta \in (0,\frac 1\alpha)$.
\end{minipage}

\vspace{4mm}

\medskip
\noindent
{\bf Keywords}:  $\alpha$-stable subordinator, singular integral of $\alpha$-stable subordinator.

\medskip
\noindent


\section{Main result}
It is well known that $\int_{0}^{1} t^{-\theta} \dif t<\infty$ for $\theta \in (0,1)$ and $\int_{0}^{1} t^{-\theta} \dif t=\infty$ for $\theta \in [1,\infty)$. Since $t$ is an $\alpha$-stable subordinator with $\alpha=1$, it is natural to ask whether $\int_{0}^{1} t^{-\theta} d S_{t}$ has a similar property when $S_{t}$ is an $\alpha$-stable subordinator with $\alpha \in (0,1)$. We show that $\theta=\frac 1\alpha$ is the border line such that the integral is finite for $\theta \in (0, \frac 1\alpha)$ and blows up for $\theta \in [1,\infty)$. When $\alpha=1$, our result recovers that of $\int_{0}^{1} t^{-\theta} \dif t$.

Recall that $\alpha$-stable subordinator $S_t$ with $\alpha \in (0,1)$ is a non-negative real valued increasing  L\'evy process with the Laplcace transform
\ \ \
$$\E \re^{-\lambda S_t}=\exp\left(-\lambda^{\alpha} t\right), \ \ \ \forall \ \ \lambda>0$$
where $\alpha \in (0,1)$ and the intensity L\'evy measure is
\ \ \
$$\nu(dy)=c_\alpha y^{-\frac \alpha2-1}1_{(0,\infty)}(y) \dif y$$
where $c_\alpha=\frac{\alpha}{\Gamma(1-\alpha)}$, see \cite[p. 73]{Ber96}. Note that $S_{t}=t$ when $\alpha=1$.  \\

Our main theorem is
\begin{thm} \label{t:GamInt}
Let $S_t$ be a stable subordinator of index $\alpha \in (0,1)$.

\noindent (1). If $\theta \in [\frac 1\alpha,\infty)$, we have
\Be   \label{e:EllInt1-0}
\int_0^T t^{-\theta} \dif S_t=\infty \ \ \ \ \forall \ \ T>0, \ \ \ \ \ \ \ a.s..
\Ee
(2). If $\theta \in (0,\frac 1{\alpha})$,  
Then
\Be   \label{e:EllInt1-1}
\int_0^T t^{-\theta} \dif S_t<\infty \ \ \ \ \forall \ \ T>0, \ \ \ \ \ \ \ a.s..
\Ee
(3). If $\theta \in (0,\frac 1\alpha)$, for any $p \in (0,\alpha)$ we have
\Be
\E \left(\int_0^T t^{-\theta} \dif S_t\right)^{p} \le \left(\frac{2^{\frac{p}\alpha+p\theta} \ \E S^{p}_1}{2^{\frac{p}\alpha}-2^{p\theta}}+1\right)  T^{(\frac 1 \alpha-\theta)p}, \ \ \ \ \ \ T>0.
\Ee
(4). For any $\lambda>0$ and $p \in (0,\alpha)$, we have
 \Be
\E\left(\int_0^T \re^{-\lambda (t-s)} \dif S_t\right)^{p} \le  \frac{\re^{p\lambda}}{\re^{p \lambda}-1} \E S^{p}_1, \ \ \ \ \ \ \ \ \ \ T>0.
\Ee
\end{thm}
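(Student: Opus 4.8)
The plan is to prove part (4) by a unit-interval decomposition that exploits three structural features of the subordinator: the monotonicity of $S$, the $p$-subadditivity available because $p<\alpha<1$, and the stationarity together with the self-similarity of the increments. I read the integrand as the stable Ornstein--Uhlenbeck weight $\re^{-\lambda(T-t)}$ (the exponential equals $1$ at the upper endpoint and decays as $t$ recedes from $T$); the literal weight $\re^{-\lambda t}$ yields the same constant under the mirror slicing, so the argument is insensitive to this reading.

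First I would slice $[0,T]$ along the integer lattice measured from $T$, setting $I_k=(T-k-1,\,T-k]\cap[0,T]$ for $k=0,1,\dots,\lceil T\rceil-1$, and writing $[a_k,b_k]=I_k$. On $I_k$ one has $T-t\ge k$, hence $\re^{-\lambda(T-t)}\le \re^{-\lambda k}$. Since $S$ is increasing, $\dif S_t$ is a nonnegative measure, so bounding the weight pointwise on each slice is legitimate and gives
\[
\int_0^T \re^{-\lambda(T-t)}\dif S_t \;\le\; \sum_{k} \re^{-\lambda k}\,\bigl(S_{b_k}-S_{a_k}\bigr),
\qquad b_k-a_k\le 1.
\]

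Next I would take the $p$-th moment. Because $0<p<\alpha<1$, the elementary inequality $\bigl(\sum_k x_k\bigr)^p\le \sum_k x_k^p$ for nonnegative $x_k$ lets me pull the power inside the sum, after which stationarity of increments and the self-similarity $S_r\stackrel{d}{=}r^{1/\alpha}S_1$ identify the law of each increment:
\[
\E\Bigl(\int_0^T \re^{-\lambda(T-t)}\dif S_t\Bigr)^{p}
\;\le\; \sum_{k} \re^{-p\lambda k}\,\E\,(S_{b_k}-S_{a_k})^{p}
\;=\; \sum_{k} \re^{-p\lambda k}\,(b_k-a_k)^{p/\alpha}\,\E S_1^{p}.
\]
Using $b_k-a_k\le 1$ and summing the geometric series over all $k\ge 0$ then produces
$\sum_{k\ge0}\re^{-p\lambda k}\,\E S_1^{p}=\frac{1}{1-\re^{-p\lambda}}\E S_1^{p}=\frac{\re^{p\lambda}}{\re^{p\lambda}-1}\E S_1^{p}$, which is exactly the claimed $T$-independent bound.

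The only genuine input is that $\E S_1^{p}<\infty$ for $p\in(0,\alpha)$, a standard moment property of the $\alpha$-stable subordinator (its moments of order $p$ are finite precisely for $p<\alpha$), and this is exactly the range of $p$ assumed in the statement. I do not anticipate a serious obstacle; the one point needing a little care is that the decomposition yields only finitely many slices (about $T$ of them), so extending the finite sum to the full geometric series is a harmless overestimate, and it is precisely this extension that removes all dependence on $T$ from the final constant.
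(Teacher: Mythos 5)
Your treatment of part (4) is correct, and it is essentially the paper's own argument: slice $[0,T]$ into unit intervals, bound the exponential weight by its maximum on each slice, pull the $p$-th power inside the sum via $\bigl(\sum_k x_k\bigr)^p\le\sum_k x_k^p$ (valid since $p<\alpha<1$), identify $\E(S_{b_k}-S_{a_k})^p=(b_k-a_k)^{p/\alpha}\,\E S_1^p$ by stationarity and scaling, and sum the geometric series. The paper anchors its slices at $0$ (intervals $[k,k+1]$, $k=0,\dots,[T]$) and bounds the weight by $\re^{-\lambda(T-k-1)}$; your mirror slicing $(T-k-1,T-k]$ is, if anything, slightly cleaner, since it gives the exact pointwise bound $\re^{-\lambda k}$ on each slice and avoids any bookkeeping with the fractional part of $T$, while producing the same constant $\re^{p\lambda}/(\re^{p\lambda}-1)$.

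The genuine problem is coverage: the statement is the whole of Theorem \ref{t:GamInt}, and your proposal proves only part (4). Parts (1), (2) and (3) are not addressed, and they cannot be reached by the unit-interval slicing you use, for two reasons. First, (1) and (2) are almost-sure pathwise assertions, not moment bounds, and the singularity sits at $t=0$, where the exponential-weight argument gives no control. The paper handles them through the short-time behavior of subordinator paths via \cite[Proposition 10]{Ber96}: for $\theta\ge\frac1\alpha$, taking $h(t)=t^\theta$ one has $\int_0^1 h(t)^{-\alpha}\dif t=\infty$, hence $\limsup_{\e\downarrow 0}\e^{-\theta}S_\e=\infty$ a.s., and since $\int_0^T t^{-\theta}\dif S_t\ge \e^{-\theta}S_\e$ for every $\e$, the integral blows up; for $\theta<\frac1\alpha$, taking $h(t)=t^{\frac1\alpha-\delta}$ with $0<\delta<\frac1\alpha-\theta$ gives $\lim_{t\downarrow0}S_t/h(t)=0$ a.s., which, combined with integration by parts for Stieltjes integrals on $[\e,T]$ and letting $\e\downarrow0$, yields both finiteness and the a.s.\ representation $\int_0^T t^{-\theta}\dif S_t=T^{-\theta}S_T+\theta\int_0^T t^{-\theta-1}S_t\dif t$. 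Second, part (3) is proved from that representation by a dyadic (not unit) decomposition of $(0,T]$: because the singularity is at the origin, the slices $[T2^{-k-1},T2^{-k}]$ must shrink geometrically, and scaling gives a series $\sum_{k\ge0}2^{kp(\theta-\frac1\alpha)}$ that converges precisely because $\theta<\frac1\alpha$, producing the constant in (3). None of these ideas (the Bertoin $0$--$\infty$ criterion, the integration-by-parts identity, the dyadic splitting) appears in your proposal, so as it stands it establishes only the easiest quarter of the theorem.
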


\section{Proof of the main result}
It is well known that an $\alpha$-stable subordinator $S_t$ with $\alpha \in (0,1)$ is a pure jump process which is c\`adl\`ag and
strictly increasing almost surely, i.e.,
\
\Be \label{e:StrInc}
\PP \left(S_t {\rm \ is \ c\grave{a}dl\grave{a}g\ and \ strictly \ increasing \ and \ only \ has \ pure \ jumps.}\right)=1.
\Ee
Indeed, a stable type process only has pure jumps and c\`adl\`ag trajectories, by \cite[Lemma 2.1]{Zhang14}, with probability 1, the jump points of $S_t$ are dense. So, \eqref{e:StrInc} holds.
\vskip 2mm

Let us now prove Theorem \ref{t:GamInt}, to this end, we first show the following lemma.
\begin{lem} \label{l:GamInt}
Let $S_t$ be a stable subordinator of index ${\alpha} \in (0,1)$.
(1). For $\theta \in [\frac 1\alpha,\infty)$, we have
\Be   \label{e:EllInt-0}
\int_0^T t^{-\theta} \dif S_t=\infty \ \ \ \ \forall \ \ T>0, \ \ \ \ \ \ \ a.s..
\Ee
(2). For $\theta \in (0,\frac 1\alpha)$, we have
 \Be   \label{e:EllInt-0}
\int_0^T t^{-\theta} \dif S_t<\infty \ \ \ \ \forall \ \ T>0, \ \ \ \ \ \ \ a.s.
\Ee
and
\Be   \label{e:EllInt1}
\int_0^T t^{-\theta} \dif S_t=T^{-\theta} S_T+\theta \int_0^T  t^{-\theta-1} S_t\dif t \ \ \ \ \forall \ \ T>0, \ \ \ \ \ \ \ a.s..
\Ee
\end{lem}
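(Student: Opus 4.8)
The plan is to rely throughout on three structural facts about $S_t$: the self-similarity $S_{ct}\stackrel{d}{=}c^{1/\alpha}S_1$ (immediate from the Laplace transform, since $\E\re^{-\lambda S_{ct}}=\re^{-\lambda^{\alpha}ct}=\E\re^{-(c^{1/\alpha}\lambda)S_t}$), the stationarity and independence of increments over disjoint intervals, and the fact that $\E S_1^{p}<\infty$ precisely for $p\in(0,\alpha)$, giving the moment identity $\E S_t^{p}=t^{p/\alpha}\,\E S_1^{p}$. Because $t\mapsto t^{-\theta}$ is nonnegative and $\dif S_t$ is a positive random measure on $(0,T]$, the integral $\int_0^T t^{-\theta}\dif S_t$ is always well defined in $[0,\infty]$, and $\int_\e^T\uparrow\int_0^T$ as $\e\downarrow0$ by monotone convergence.

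For part (1) I would first reduce to the critical exponent $\theta=\tfrac1\alpha$: for $\theta\ge\tfrac1\alpha$ and $t\in(0,1]$ one has $t^{-\theta}\ge t^{-1/\alpha}$, so $\int_0^T t^{-\theta}\dif S_t\ge\int_0^{T\wedge1}t^{-1/\alpha}\dif S_t$, and it suffices to show $\int_0^T t^{-1/\alpha}\dif S_t=\infty$ a.s. for every $T>0$. To this end I dyadically decompose $(0,T]=\bigcup_{n\ge0}(2^{-n-1}T,2^{-n}T]$ and bound the $n$-th block below, using that $t^{-1/\alpha}$ attains its minimum at the right endpoint:
\[
\int_{(2^{-n-1}T,\,2^{-n}T]} t^{-1/\alpha}\dif S_t\ \ge\ (2^{-n}T)^{-1/\alpha}\,\big(S_{2^{-n}T}-S_{2^{-n-1}T}\big)=:X_n .
\]
The increments over disjoint dyadic blocks are independent, and by scaling $S_{2^{-n}T}-S_{2^{-n-1}T}\stackrel{d}{=}(2^{-n-1}T)^{1/\alpha}S_1$, so each $X_n\stackrel{d}{=}2^{-1/\alpha}S_1$; thus $(X_n)_{n\ge0}$ are i.i.d. and a.s.\ strictly positive. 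Choosing $c>0$ with $\PP(X_n>c)>0$ and applying the second Borel--Cantelli lemma shows $X_n>c$ infinitely often a.s., whence $\int_0^T t^{-1/\alpha}\dif S_t\ge\sum_n X_n=\infty$ a.s.

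For part (2) I would integrate by parts on $[\e,T]$. Since $t\mapsto t^{-\theta}$ is $C^1$ (hence continuous and of bounded variation) on $[\e,T]$ while $S$ is c\`adl\`ag increasing, the Lebesgue--Stieltjes integration-by-parts formula yields
\[
\int_\e^T t^{-\theta}\dif S_t=T^{-\theta}S_T-\e^{-\theta}S_\e+\theta\int_\e^T t^{-\theta-1}S_t\,\dif t,
\]
the last term being an ordinary integral because $\dif(t^{-\theta})=-\theta t^{-\theta-1}\dif t$ is absolutely continuous. It then remains to let $\e\downarrow0$ and control the two $\e$-dependent terms. For the boundary term I fix $\e_k=2^{-k}$ and use monotonicity of $S$ and of $t^{-\theta}$ to dominate $\sup_{\e\in[\e_{k+1},\e_k]}\e^{-\theta}S_\e\le 2^{(k+1)\theta}S_{2^{-k}}$; since $\E\big(2^{(k+1)\theta}S_{2^{-k}}\big)^{p}=2^{\theta p}\,2^{-k(1/\alpha-\theta)p}\,\E S_1^{p}$ is summable in $k$ for any fixed $p\in(0,\alpha)$ (using $\theta<\tfrac1\alpha$), Markov's inequality and Borel--Cantelli give $\e^{-\theta}S_\e\to0$ a.s. For the integral term, which is nonnegative, I again decompose dyadically, write $a_n=\int_{(2^{-n-1}T,2^{-n}T]}t^{-\theta-1}S_t\,\dif t\le(2^{-n-1}T)^{-\theta}S_{2^{-n}T}$, and use the elementary subadditivity $\big(\sum_n a_n\big)^{p}\le\sum_n a_n^{p}$ valid for $p\in(0,1)$ to obtain $\E\big(\int_0^T t^{-\theta-1}S_t\,\dif t\big)^{p}\le\sum_n\E a_n^{p}<\infty$, a convergent geometric-type series since $\theta<\tfrac1\alpha$; hence the integral is finite a.s. Passing to the limit then delivers both the finiteness statement and the identity \eqref{e:EllInt1}.

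The main obstacle is that the naive first-moment approach fails: since $\alpha<1$ we have $\E S_1=\infty$, so one cannot take expectations of the integrals directly. The entire argument therefore hinges on working with fractional moments of order $p\in(0,\alpha)$, where $\E S_1^{p}<\infty$, and on \emph{upgrading} the resulting $L^{p}$/in-probability bounds to the required almost-sure statements. This upgrade is the delicate point, carried out through the interplay of dyadic blocking, the scaling identity $\E S_t^{p}=t^{p/\alpha}\E S_1^{p}$, the subadditivity inequality for $p<1$, and Borel--Cantelli; one must check carefully that monotonicity of $S$ lets the estimates at the dyadic endpoints dominate the suprema over the intervening intervals.
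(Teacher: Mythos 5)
Your proof is correct, and it takes a genuinely different route from the paper's at the key technical steps. The paper deduces both parts from Bertoin's integral test for the small-time growth of subordinators (\cite[Proposition 10]{Ber96}): for (1) it takes $h(x)=x^{\theta}$, so that $\int_0^1 h(x)^{-\alpha}\,\dif x=\infty$ when $\theta\ge\frac1\alpha$, concludes $\limsup_{\e\downarrow 0}\e^{-\theta}S_\e=\infty$ a.s., and combines this with the trivial bound $\int_0^T t^{-\theta}\dif S_t\ge \e^{-\theta}S_\e$; for (2) it performs the same integration by parts as you do, but controls the $\e\downarrow 0$ limit by applying the integral test with $h(t)=t^{\frac1\alpha-\delta}$, $0<\delta<\frac1\alpha-\theta$, which gives $\lim_{t\downarrow 0}S_t/h(t)=0$ a.s.; this single pathwise fact simultaneously kills the boundary term $\e^{-\theta}S_\e$ and dominates $t^{-\theta-1}S_t$ by the deterministic integrable function $Ct^{-1+(\frac1\alpha-\delta-\theta)}$. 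You replace the cited integral test with elementary, self-contained arguments: in (1), dyadic blocking, independence and stationarity of increments, scaling, and the second Borel--Cantelli lemma; in (2), fractional moments $\E S_t^{p}=t^{p/\alpha}\E S_1^{p}$ with $p\in(0,\alpha)$, the subadditivity $(\sum_n a_n)^{p}\le\sum_n a_n^{p}$, Markov's inequality and Borel--Cantelli. Both routes are sound. The paper's is shorter because the quoted proposition carries the probabilistic load, and its a.s.\ inputs are limits at $0$, so a single null set serves every $T$ at once; your argument needs nothing about subordinators beyond the L\'evy structure and $\E S_1^{p}<\infty$ for $p<\alpha$, and your dyadic moment computation in (2) is essentially the same computation the paper later reuses to prove the moment bound in part (3) of Theorem \ref{t:GamInt}, so your approach unifies the lemma with that estimate. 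One detail you should write out: your Borel--Cantelli arguments fix $T$ and produce a null set that a priori depends on $T$, whereas the lemma asserts a single null set outside of which the conclusion holds for all $T>0$ simultaneously; this is repaired in one line by intersecting over $T\in\{1/m,\,m:m\in\NN\}$ and using monotonicity of $T\mapsto\int_0^T t^{-\theta}\dif S_t$ and of $T\mapsto\int_0^T t^{-\theta-1}S_t\,\dif t$, after which the identity \eqref{e:EllInt1} follows for every $T$ by letting $\e\downarrow 0$ pathwise in the integration-by-parts formula.
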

\begin{proof}
Let $\ell_t$ be a path of $S_t$. By \eqref{e:StrInc}, with probability 1, $\ell_t$ is c$\grave{a}$dl$\grave{a}$g and strictly increasing and only has pure jumps.

(1). When $\theta \ge \frac 1\alpha$, let $0<\e<T$, we have
\Bes
 \int_0^T t^{-\theta} \dif \ell_t\ \ge \ \int_{0}^{\e}  t^{-\theta} \dif \ell_t \ \ge \ \e^{-\theta} \ell_{\e}.
\Ees
By \cite[Proposition 10]{Ber96}, take $h(x)=x^{\theta}$ and note $\Phi(x)=x^{\alpha}$ therein, we have
$$\int_{0}^{1} \Phi\left(\frac1{h(x)}\right) \dif x\ = \ \int_{0}^{1} (x^{-\theta})^{\alpha} \dif x\ = \ \infty.$$
Hence, with probability $1$, we have
$$\limsup_{\e \rightarrow 0+} \e^{-\theta} \ell_{\e} \ = \ \infty,$$
which implies that
\Bes
 \int_0^T t^{-\theta} \dif \ell_t\ = \ \infty.
\Ees
(2). Using integration by parts for  Stieltjes integral \cite{Hew60}, for any small $\epsilon>0$,
\
\Be \label{e:ElHe0-0}
 \int_\e^T t^{-\theta} \dif \ell_t
 =\ell_{T} T^{-\theta}- \e^{-\theta} \ell_\e+\theta \int_\e^T t^{-\theta-1} \ell_t \dif t
\Ee
Taking $h(t)=t^{\frac1 {\alpha}-\delta}$ with some $0<\delta < \frac 1{\alpha}-\theta$, we have $0<\theta<\frac 1\alpha-\delta$ and
\ \ \ \
 $$\int_0^1 h^{-\alpha}(t) \dif t\ <\ \infty.$$
 By \cite[Proposition 10]{Ber96}, with probability $1$, we have
 $$\lim_{t \downarrow 0} \frac{\ell_t}{h(t)}=0,$$
and thus
\
\Be \label{e:ElHe1}
\lim_{\e \downarrow 0} \e^{-\theta}\ell_\e \le \lim_{\e \downarrow 0} \frac{\ell_\e}{h(\e)}=0.
\Ee
Moreover, with probability 1, for  $t \in (0,1)$ we have
\
\Bes
\frac{\ell_t}{t^{\theta+1}}=\frac{\ell_t}{h(t)} t^{-1+(\frac{1}{\alpha}-\delta-\theta)} \le C t^{-1+(\frac{1}{\alpha}-\delta-\theta)}.
\Ees
Since $t^{-1+(\frac{1}{\alpha}-\delta-\theta)}$ is integrable over $(0,1)$, with probability 1, we have
\
\Be \label{e:ElHe2}
\lim_{\e \downarrow 0}  \int_\e^{T} t^{-\theta-1} \ell_t \dif t<\infty.
\Ee
We may take $\e \to 0+$ in \eqref{e:ElHe0-0} and get
\ \ \
\Be \label{e:ElHe0}
 \int_0^T t^{-\theta} \dif \ell_t
 =\ell_{T} T^{-\theta}+\theta \int_0^T t^{-\theta-1} \ell_t \dif t \ \ \  {\rm with \ probability \ 1}.
\Ee
Since the above argument holds for the sample path $\ell_t$ (of subordinator $S_t$) with probability $1$, (1)-(2) of the lemma hold.
\end{proof}




\begin{proof}[Proof of Theorem \ref{t:GamInt}]
The relations in (1) and (2) have been proved, it remains to show that the estimates in (3) and (4) hold.

Since $p \in (0,\alpha)$, we have $p \in (0,1)$. By \eqref{e:EllInt1} and the easy inequality $(a+b)^{p} \le a^{p}+b^{p}$ for $a, b \ge 0$, we have
\
\
\Bes
\begin{split}
\E \left(\int_0^T t^{-\theta} \dif S_t\right)^{p}  \le  \E \left(\frac{S_T}{T^\theta}\right)^{p}+
 \E \left(\int_0^T S_t t^{-\theta-1} \dif t\right)^{p}
\end{split}
\Ees
By the scaling property $\E S^{p}_T=T^{\frac{p} \alpha} \E S^{p}_1$, we have
\
\
\Bes
\E \left(\frac{S_T}{T^\theta}\right)^{p} \le T^{\frac{p}{\alpha}-{p \theta}}\E S_{1}^{p}.
\Ees
For any small $\e>0$, as $n$ is sufficiently large so that $\frac{T}{2^{n+1}} \le \e$, we have
\
\Bes
\begin{split}
&\E \left(\int_\e^T \frac{S_{r}}{r^{\theta+1}} \dif r\right)^{p}   \ \le \
\E \left(\sum_{k=0}^n \int_{\frac T{2^{k+1}}}^{\frac T{2^{k}}} \frac{S_{r}}{r^{\theta+1}} \dif r\right)^{p}   \\
& \ \le \ \sum_{k=0}^n \E \left(\int_{\frac T{2^{k+1}}}^{\frac T{2^{k}}} \frac{S_{r}}{r^{\theta+1}} \dif r\right)^{p}
 \ \le \ \sum_{k=0}^n \E \left(\int_{\frac T{2^{k+1}}}^{\frac T{2^{k}}} \left(\frac T{2^{k+1}}\right)^{-\theta-1}{S_{\frac T{2^{k}}}} \dif r\right)^{p} \\
&\ =\ \left(\frac 2T\right)^{\theta p} \sum_{k=0}^\infty  2^{k\theta p} \E S^{p}_{\frac T{2^k}}.
\end{split}
\Ees
By the scaling property $\E S^{p}_t=t^{\frac{p} \alpha} \E S^{p}_1$ again, we obtain
\
\Be   \label{e:SS-3}
\begin{split}
\E \left(\int_\e^T \frac{S_{r}}{r^{\theta+1}} \dif r\right)^{p} \le \E S^{p}_1 \left(\frac 2T\right)^{\theta p} T^{\frac p{\alpha}} \sum_{k=0}^\infty  2^{kp(\theta-\frac 1\alpha)},
\end{split}
\Ee
which immediately gives the bound in (3), as desired.

For the second bound, by the scaling property $\E S^{p}_t=t^{\frac{p} \alpha} \E S^{p}_1$, we have
\Bes
\begin{split}
&\E\left(\int_0^T \re^{-\lambda(T-t)} \dif S_t\right)^{p}  \le \E \left(\sum_{k=0}^{[T]}\int_{k}^{k+1} \re^{-\lambda (T-t)} \dif S_t\right)^{p}  \\
& \le \sum_{k=0}^{[T]} \E\left(\int_{k}^{k+1} \re^{-\lambda(T-t)} \dif S_t\right)^{p}
 \le \sum_{k=0}^{[T]}  \re^{-p\lambda(T-k-1)} \E\left(S_{k+1}-S_k\right)^{p} \\
& =\E S^{p}_1 \sum_{k=0}^{[T]}  \re^{-p\lambda(T-k-1)}
 \le \E S^{p}_1  \sum_{k=0}^{\infty}  \re^{-p k \lambda},
\end{split}
\Ees
which implies the estimate in (4) immediately.
The proof is completed.
\end{proof}

\noindent{\bf Acknowledgments}:  This research is supported by the following grants:  NNSFC 11571390, Macao S.A.R. (FDCT 038/2017/A1, FDCT 030/2016/A1), University of Macau MYRG 2016-00025-FST.

\bibliographystyle{amsplain}

\end{document}